\documentclass{article}%
\usepackage{graphicx}
\usepackage{amsmath}
\usepackage{amsfonts}
\usepackage{amssymb}%
\providecommand{\U}[1]{\protect\rule{.1in}{.1in}}
\providecommand{\U}[1]{\protect\rule{.1in}{.1in}}
\providecommand{\U}[1]{\protect\rule{.1in}{.1in}}
\providecommand{\U}[1]{\protect\rule{.1in}{.1in}}
\providecommand{\U}[1]{\protect\rule{.1in}{.1in}}
\providecommand{\U}[1]{\protect\rule{.1in}{.1in}}
\providecommand{\U}[1]{\protect\rule{.1in}{.1in}}
\providecommand{\U}[1]{\protect\rule{.1in}{.1in}}
\providecommand{\U}[1]{\protect\rule{.1in}{.1in}}
\providecommand{\U}[1]{\protect\rule{.1in}{.1in}}
\providecommand{\U}[1]{\protect\rule{.1in}{.1in}}
\providecommand{\U}[1]{\protect\rule{.1in}{.1in}}
\providecommand{\U}[1]{\protect\rule{.1in}{.1in}}
\providecommand{\U}[1]{\protect\rule{.1in}{.1in}}
\providecommand{\U}[1]{\protect\rule{.1in}{.1in}}
\providecommand{\U}[1]{\protect\rule{.1in}{.1in}}
\providecommand{\U}[1]{\protect\rule{.1in}{.1in}}
\providecommand{\U}[1]{\protect\rule{.1in}{.1in}}
\providecommand{\U}[1]{\protect\rule{.1in}{.1in}}
\providecommand{\U}[1]{\protect\rule{.1in}{.1in}}
\providecommand{\U}[1]{\protect\rule{.1in}{.1in}}
\providecommand{\U}[1]{\protect\rule{.1in}{.1in}}
\providecommand{\U}[1]{\protect\rule{.1in}{.1in}}
\providecommand{\U}[1]{\protect\rule{.1in}{.1in}}
\providecommand{\U}[1]{\protect\rule{.1in}{.1in}}
\providecommand{\U}[1]{\protect\rule{.1in}{.1in}}
\providecommand{\U}[1]{\protect\rule{.1in}{.1in}}
\providecommand{\U}[1]{\protect\rule{.1in}{.1in}}
\providecommand{\U}[1]{\protect\rule{.1in}{.1in}}

\newtheorem{theorem}{Theorem}
{}

\newtheorem{definition}{Definition}

{}

\newtheorem{proposition}{Proposition}

\newenvironment{proof}[1][Proof]{\textbf{#1.} }{\ \rule{0.5em}{0.5em}}

\begin{document}

\title{On the Real Spectum of the One-Dimensional Dirac Operator with PT-Symmetric Coefficients}
\author{O. A. Veliev\\{\small \ Dogus University, }\\{\small Esenkent 34755, \ Istanbul, Turkey.}\\\ {\small e-mail: oveliev@dogus.edu.tr}}
\date{}
\maketitle

\begin{abstract}
In this paper, we consider the spectrum of the one dimensional
non-self-adjoint Dirac operator with $2\times2$ matrix-valued coefficients
whose entries are PT-symmetric periodic functions. We prove that the spectrum
of the considered operator contains a large part of the real axis.

Keywords: Dirac operator, PT-symmetric coefficients, real spectrum.

AMS Mathematics Subject Classification: 34L05, 34L20.

\end{abstract}

\section{ Introduction and Preliminary Facts}

In this paper, we consider the spectrum of the one-dimensional Dirac operator
$L(P)$ generated in the space $L_{2}^{2}(-\infty,\infty)$ of the
complex-valued vector functions $\mathbf{y}(x)\mathbf{=}\left(
\begin{array}
[c]{c}%
y_{1}(x)\\
y_{2}(x)
\end{array}
\right)  $ by the differential expression
\begin{equation}
l(P)=\left(
\begin{array}
[c]{cc}%
-i & 0\\
0 & i
\end{array}
\right)  \mathbf{y}^{^{\prime}}(x)+P\mathbf{y}(x), \tag{1}%
\end{equation}
where $P(x)=\left(
\begin{array}
[c]{cc}%
p_{1}(x) & p_{2}(x)\\
p_{3}(x) & p_{4}(x)
\end{array}
\right)  ,$ $p_{j}$ for $j=1,2,3,4$ are $\pi$-periodic, complex-valued,
PT-symmetric, locally integrable functions.

It is well-known [19, 20, 23, 31,40] that the spectrum $\sigma(L(P))$ of the
operator $L(P)$ is the union of the spectra $\sigma(L_{t}(P))$ of the
operators $L_{t}(P)$ for $t\in\lbrack0,2)$ generated in $L_{2}^{2}[0,\pi]$ by
(1) and the quasiperiodic boundary condition
\begin{equation}
\mathbf{y}(\pi)=e^{i\pi t}\mathbf{y}(0). \tag{2}%
\end{equation}
The spectrum of $L_{t}(P)$ consists of eigenvalues, called the Bloch
eigenvalues of $L(P).$

Let us briefly describe the structure of this paper. First, we consider the
more general differential operator $L(n,m)$ generated in the space $L_{2}%
^{m}(-\infty,\infty)$ by the differential expression
\begin{equation}
l(n,m)=(-i)^{n}P_{0}\mathbf{y}^{(n)}+(-i)^{n-1}P_{1}\mathbf{y}^{(n-1)}%
+(-i)^{n-2}P_{2}\mathbf{y}^{(n-2)}+...+P_{n}\mathbf{y}, \tag{3}%
\end{equation}
where $n\geq1,$ $P_{k}=(p_{k,i,j})$ for $k=0,1,...,n$ are the $m\times m$
matrices with the complex-valued entries $p_{k,i,j}$ satisfying the following
conditions
\begin{equation}
p_{k,i,j}\in W_{1}^{n-k}[0,\pi],\text{ }p_{k,i,j}\left(  x+\pi\right)
=p_{k,i,j}\left(  x\right)  ,\text{ }p_{k,i,j}\left(  -x\right)
=\overline{p_{k,i,j}\left(  x\right)  },\text{ }\det P_{0}(x)\neq0 \tag{4}%
\end{equation}
for all $x\in\mathbb{R}.$ Here $\mathbf{y}=(y_{1},y_{2},...,y_{m})^{T}$ is a
vector-valued function, $L_{2}^{m}(a,b)$ for $-\infty\leq a<b\leq\infty$ is
the space of the vector-valued functions $\mathbf{f}=\left(  f_{1}%
,f_{2},...,f_{m}\right)  ^{T}$ with the norm $\left\Vert \cdot\right\Vert $
and inner product $(\cdot,\cdot)$ defined by%
\[
\left\Vert \mathbf{f}\right\Vert =\left(  \int\nolimits_{(a,b)}\left\vert
\mathbf{f}\left(  x\right)  \right\vert ^{2}dx\right)  ^{\frac{1}{2}},\text{
}(\mathbf{f,g})=\int\nolimits_{(a,b)}\left\langle \mathbf{f}\left(  x\right)
,\mathbf{g}\left(  x\right)  \right\rangle dx,
\]
where $\left\vert \cdot\right\vert $ and $\left\langle \cdot,\cdot
\right\rangle $ are the norm and inner product in $\mathbb{C}^{m}.$

We use the following definition of a PT-symmetric expression and a
PT-symmetric operator.

\begin{definition}
We say that the expression $l(n,m)$ and the operator $L(n,m)$ are PT-symmetric
if
\[
PTl(n,m)\mathbf{y}=l(n,m)PT\mathbf{y}%
\]
for all $\mathbf{y}$ in the domain $D(L(n,m))$ of the operator $L(n,m),$ where
the space-reflection (parity) operator $P$ and the complex-conjugation
operator $T$ are defined by $(P\mathbf{y})(x)=\mathbf{y}(-x)$ and
$(T\mathbf{y})(x)=\overline{\mathbf{y}(x)}.$
\end{definition}

Note that the definition of PT-symmetry for $l(n,m)$ and $L(n,m)$ are
equivalent, since the operator $L(n,m)$ is defined by $l(n,m)$ without
boundary conditions. For operators defined on bounded intervals, however, the
boundary conditions must also be taken into account (see Definition 2 in
Section 3).

The main results of this paper are as follows and are proved in Section 2.
First, we prove that the expression $l(n,m)$ and the operator $L(n,m)$ are
PT-symmetric if and only if all the functions $p_{k,i,j}$ are PT-symmetric
(see Theorem 1). It follows from Theorem 1 that the Dirac operator $L(P)$ is
PT-symmetric operator if $p_{j}$ for $j=1,2,3,4$ are the PT-symmetric
functions, since, in case (1), $P_{0}$ has the form $\left(
\begin{array}
[c]{cc}%
1 & 0\\
0 & -1
\end{array}
\right)  $ and the constant functions $f=\pm1$ are PT-symmetric. Note that
there are two alternative forms of writing the Dirac system. In this paper, we
consider the system of the form (1). Another form of the Dirac system (see,
for example, [25]) is%
\begin{equation}
\left(
\begin{array}
[c]{cc}%
0 & -1\\
1 & 0
\end{array}
\right)  \mathbf{y}^{^{\prime}}(x)+\left(
\begin{array}
[c]{cc}%
p_{1}(x) & p_{2}(x)\\
p_{3}(x) & p_{4}(x)
\end{array}
\right)  \mathbf{y}(x). \tag{5}%
\end{equation}
Dirac operator defined by (5) is not PT-symmetric, since, in this case,
$P_{0}$ has the form $\frac{1}{-i}\left(
\begin{array}
[c]{cc}%
0 & -1\\
1 & 0
\end{array}
\right)  =\left(
\begin{array}
[c]{cc}%
0 & -i\\
i & 0
\end{array}
\right)  $ and the constant functions $f=\pm i$ are not PT-symmetric. \ 

Since a fundamental mathematical question in PT-symmetric quantum mechanics
concerns the reality of the spectrum of the non-self-adjoint operator under
consideration (see [1, 3, 4, 6, 7, 21, 22, 28, 30, 32-39] and the references
therein), we focus on the real part of the spectrum of $L(P)$. In other words,
we consider the real Bloch eigenvalues in the PT-symmetric case. In fact, in
this case, a characteristic property of the Bloch eigenvalues is that they are
either real or occur in complex-conjugate pairs. In the case of the
Schr\"{o}dinger operator $S_{t},$ generated in $L_{2}[0,\pi]$ by the
expression $-y^{^{\prime\prime}}(x)+q\left(  x\right)  y(x),$ with
PT-symmetric potential $q$ and quasiperiodic boundary conditions, it is
well-known that if $\lambda$ is an eigenvalue of $S_{t}$, then $\overline
{\lambda}$ is also an eigenvalue of $S_{t}$ (see [18, 32]). Taking into
account that this fact has not been formulated for $L_{t}(n,m)$ with arbitrary
$n$ and $m$, in Section 2, we prove this result for the general case (see
Theorem 2), where $L_{t}(n,m)$ is the operator generated in $L_{2}^{m}[0,\pi]$
by the differential expression (3) and boundary conditions
\begin{equation}
\mathbf{y}(\pi)=e^{i\pi t}\mathbf{y}(0),\text{ }\mathbf{y}^{\prime}%
(\pi)=e^{i\pi t}\mathbf{y}^{\prime}(0),...,\mathbf{y}^{(n-1)}(\pi)=e^{i\pi
t}\mathbf{y}^{(n-1)}(0).\tag{6}%
\end{equation}
Then, using this result together with the asymptotic estimates for the
eigenvalues of the operator $L_{t}(P)$ (see Theorem 3), we prove that the real
part of the spectrum of $L(P)$ contains a large portion of $(-\infty,\infty)$
(see Theorem 4).

The case in which $L(n,m)$ is generated by the expression
\[
-y^{^{\prime\prime}}(x)+Q\left(  x\right)  y(x),
\]
where the entries of the $m\times m$ matrix $Q$ are PT-symmetric functions, as
well as the case of higher-order differential expressions with PT-symmetric
matrix coefficients, were considered in [35] and [36-38 ]; see also Chapters 6
and 7 of [39]. Note that the perturbation theory used in those papers for
$n\geq2$ cannot be applied to $L(P)$ for the following reason. The Bloch
eigenvalues of operator of order $n\geq2$ are located in neighborhoods of
$(2k+t)^{n},$ for $k\in\mathbb{Z}$, where the distance between neighboring
neighborhoods tends to infinity as $k\rightarrow\infty.$ This property of the
distances is an essential ingredient in [35-39] for the case $n\geq2$, but it
cannot be used in the case $n=1$ of the Dirac operator $L(P)$.

Therefore, for the asymptotic estimates of the Bloch eigenvalues of the Dirac
operator, we use the asymptotic formulas for the matrix
\[
E(x,\lambda)=\left(
\begin{array}
[c]{cc}%
e_{11}(x,\lambda) & e_{12}(x,\lambda)\\
e_{21}(x,\lambda) & e_{22}(x,\lambda)
\end{array}
\right)
\]
of the fundamental system of solutions for the equation
\begin{equation}
\left(
\begin{array}
[c]{cc}%
-i & 0\\
0 & i
\end{array}
\right)  \mathbf{y}^{^{\prime}}(x)+\left(
\begin{array}
[c]{cc}%
0 & \widetilde{p_{2}}(x)\\
\widetilde{p_{3}}(x) & 0
\end{array}
\right)  \mathbf{y}(x)=\lambda\mathbf{y}(x), \tag{7}%
\end{equation}
with the initial conditions $E(0,\lambda)=I,$ obtained in [25], where it was
proved that
\begin{align}
e_{11}(x,\lambda)  &  =e^{i\lambda x}+\rho_{11}(x,\lambda),\text{ }%
e_{12}(x,\lambda)=\rho_{11}(x,\lambda),\text{ }\tag{8}\\
e_{21}(x,\lambda)  &  =\rho_{21}(x,\lambda),\text{ }e_{22}(x,\lambda
)=e^{-i\lambda x}+\rho_{22}(x,\lambda)\nonumber
\end{align}
and $\rho_{kl}(x,\lambda)\rightarrow0,$ as $\lambda\rightarrow\infty$ \ for
$k,l\in\left\{  1,2\right\}  .$ Moreover, these estimates hold uniformly with
respect to $x\in\lbrack0,\pi]$ and $\left\vert \operatorname{Im}%
\lambda\right\vert <a,$ where $a$ is a fixed number. First, we consider the
operator $L\left(  \widetilde{P}\right)  ,$ where
\begin{equation}
\widetilde{P}(x)=\left(
\begin{array}
[c]{cc}%
0 & \widetilde{p_{2}}(x)\\
\widetilde{p_{3}}(x) & 0
\end{array}
\right)  ,\overline{\widetilde{p_{2}}(-x)}=\widetilde{p_{2}}(x),\text{
}\overline{\widetilde{p_{3}}(-x)}=\widetilde{p_{3}}(x). \tag{9}%
\end{equation}
Then, using this and Statement 1.3 of [25], about reduces case (1) to case
(7), we consider the operator $L(P).$

Note that there are many papers on Dirac operators in $L_{2}^{2}[a,b],$ for
$-\infty<a<b<\infty,$ under various boundary conditions (see [5, 8-17, 24-27,
29] and the references therein). Here, we use only the asymptotic formulas
(8), obtained in [25], for the solutions, which do not depend on the boundary
conditions. These asymptotic formulas are used in Section 2 to obtain estimate
for the eigenvalues of the family of operators $\left\{  L_{t}(P):t\neq
0,1\right\}  $ that is uniform with respect to $t$. Certainly, asymptotic
formulas for the eigenvalues of $L(P,U)$ are available for different boundary
conditions $U(\mathbf{y})=0$. However, it is easier to obtain an asymptotic
formula for this family of operators that is uniform with respect to $t$ by
using the asymptotic formulas (8) and Rouch\'{e}'s theorem than to establish
the uniformity of the asymptotic formulas obtained in different papers for
different boundary conditions. Therefore, we do not discuss in detail the
works devoted to Dirac operators on a bounded interval.

\section{Main Results}

First, let us consider the general operator $L(n,m),$ using Definition 1.

\begin{theorem}
The expression $l(n,m)$ and the operator $L(n,m)$ are PT-symmetric if and only
if $p_{k,i,j}$ are PT-symmetric functions for all $k,i$, $j.$
\end{theorem}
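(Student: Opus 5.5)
The plan is to compute both sides of the identity $PTl(n,m)\mathbf{y}=l(n,m)PT\mathbf{y}$ term by term and then compare coefficients. Put $\mathbf{z}=PT\mathbf{y}$, so that $\mathbf{z}(x)=\overline{\mathbf{y}(-x)}$. By the chain rule,
$$\mathbf{z}^{(s)}(x)=(-1)^{s}\overline{\mathbf{y}^{(s)}(-x)}.$$
Hence
$$(l(n,m)PT\mathbf{y})(x)=\sum_{k=0}^{n}(-i)^{n-k}(-1)^{n-k}P_{k}(x)\overline{\mathbf{y}^{(n-k)}(-x)}=\sum_{k=0}^{n}i^{n-k}P_{k}(x)\overline{\mathbf{y}^{(n-k)}(-x)}.$$
On the other side,
$$(PTl(n,m)\mathbf{y})(x)=\overline{\sum_{k}(-i)^{n-k}P_{k}(-x)\mathbf{y}^{(n-k)}(-x)}=\sum_{k}i^{n-k}\overline{P_{k}(-x)}\,\overline{\mathbf{y}^{(n-k)}(-x)}.$$
Subtracting the two, PT-symmetry is equivalent to
$$\sum_{k=0}^{n}i^{n-k}\bigl(P_{k}(x)-\overline{P_{k}(-x)}\bigr)\overline{\mathbf{y}^{(n-k)}(-x)}=0$$
for all $\mathbf{y}\in D(L(n,m))$.

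\emph{Sufficiency.} If every $p_{k,i,j}$ is PT-symmetric, i.e. $p_{k,i,j}(-x)=\overline{p_{k,i,j}(x)}$, then $P_k(x)=\overline{P_k(-x)}$ for each $k$. Every bracket in the last sum vanishes, so the identity holds. This direction is immediate. One only needs to note that the domain is invariant under $PT$: the map $\mathbf{y}\mapsto\overline{\mathbf{y}(-\cdot)}$ preserves the Sobolev classes defining $D(L(n,m))$ on the whole line.

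\emph{Necessity.} Set $A_k(x)=i^{n-k}\bigl(P_k(x)-\overline{P_k(-x)}\bigr)$ and write $\mathbf{w}(x)=\overline{\mathbf{y}(-x)}$. As $\mathbf{y}$ ranges over the domain, which contains $C_0^\infty$ vector functions, $\mathbf{w}$ ranges over the same class, and $\overline{\mathbf{y}^{(s)}(-x)}=(-1)^s\mathbf{w}^{(s)}(x)$. The condition therefore becomes a differential operator identity
$$\sum_{k=0}^{n}(-1)^{n-k}A_k(x)\mathbf{w}^{(n-k)}(x)=0 \quad\text{for all } \mathbf{w}\in C_0^\infty.$$
To conclude $A_k\equiv0$, I would peel off coefficients from lowest order upward. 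Fix a point $x_0$ and a bump function $\varphi$ that equals $1$ near $x_0$. Taking $\mathbf{w}=\varphi\,\mathbf{e}_j$ kills all derivative terms near $x_0$, which gives $A_n\mathbf{e}_j=0$ there, so $A_n\equiv0$. Next, $\mathbf{w}=(x-x_0)\varphi\,\mathbf{e}_j$ gives $A_{n-1}\mathbf{e}_j=0$ near $x_0$. Continuing with $(x-x_0)^s\varphi\,\mathbf{e}_j$ for $s=0,1,\dots,n$ (by induction, since the lower-order coefficients are already known to vanish) yields $A_k\equiv0$ for all $k$, i.e. $p_{k,i,j}(x)=\overline{p_{k,i,j}(-x)}$.

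The step needing the most care is the necessity direction, because the coefficients are only in $W_1^{n-k}$, so "$A_k=0$ near $x_0$" must be read almost everywhere. The test-function argument works pointwise a.e. on neighborhoods, and continuity (for $k<n$) or the $L_1$ sense (for $k=n$) then gives equality as functions. I would also remark that the $i^{n-k}$ factors, coming from $(-i)^{n-k}$ combined with $(-1)^{n-k}$, are exactly what make the conditions reduce to plain PT-symmetry of each entry. This is consistent with the paper's remark that form (5) fails to be PT-symmetric, since there $P_0$ has entries $\pm i$, which are not PT-symmetric constants.
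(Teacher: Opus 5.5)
Your argument is correct. The sufficiency half is the same term-by-term computation as in the paper, which checks (10) separately for each $P_kD^{n-k}$. The necessity half is where you genuinely differ.

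The paper supposes $\overline{P_k(-x)}-P_k(x)\neq O$ for $k\in\{k_1,\dots,k_s\}$. It observes that (11) can then hold only for those $\mathbf{y}$ for which $\overline{\mathbf{y}(-x)}$ solves the differential equation whose coefficients are these differences. It then asserts that this solution set $E$ is finite dimensional, so (11) fails on $D(L(n,m))\setminus E$. That is shorter, but the finite-dimensionality claim needs some nondegeneracy of the difference coefficients, and nothing provides it. For example, if all the differences vanish on a common subinterval, which periodic, non-PT-symmetric coefficients can do, then $E$ contains every $\mathbf{y}$ with $\overline{\mathbf{y}(-x)}$ supported there. In that case $E$ is infinite dimensional, and one still has to exhibit a $\mathbf{y}$ violating (11).

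Your localized test functions $(x-x_0)^s\varphi\,\mathbf{e}_j$ do exactly that, near an arbitrary point. They give $A_k=0$ a.e. for every $k$ with no assumption on the structure of the $A_k$, so your route is the more robust one.

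One technicality needs tidying. Under (4), $P_n$ is only in $L_1$, so $C_0^\infty$ functions need not lie in the $L_2$-domain $D(L(n,m))$: the term $P_n\mathbf{w}$ may fail to be square integrable. This is harmless if you read the identity at the level of the expression $l(n,m)$, as the paper does. Otherwise, replace $\mathbf{e}_j$ by cut-off local solutions $\mathbf{v}$ of $l(n,m)\mathbf{v}=0$. These, together with $(x-x_0)^s\varphi\mathbf{v}$, do lie in the domain, and under the hypothesis $PT$ maps the domain into itself, so they are admissible. Their values also span $\mathbb{C}^m$ at each point. Your induction on $s$, combined with the Leibniz rule, then yields $A_0\mathbf{v}=0$, $A_1\mathbf{v}=0,\dots,A_n\mathbf{v}=0$ in turn, now in top-down order.
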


\begin{proof}
\bigskip Let $p_{k,i,j}$ be PT-symmetric functions for all $k,i$, $j$. The
operator $L(n,m)$ can be written in the form
\[
L(n,m)\mathbf{y}=\sum_{k=0}^{n}P_{k}D^{n-k}\mathbf{y},
\]
where $D\mathbf{y}=-i\mathbf{y}^{^{\prime}}.$ It is enough to prove that
\begin{equation}
\left(  P_{k}D^{n-k}\right)  PT\mathbf{y}(x)=PT\left(  P_{k}D^{n-k}%
\mathbf{y}(x)\right)  \tag{10}%
\end{equation}
for each $k=0,1,...,n$ and for all $\mathbf{y}\in D(L(n,m)).$ To this end, let
us calculate the left- and right-hand sides of (10) separately. Since
$PT\mathbf{y}(x)=\overline{\mathbf{y}(-x)}$ , by the chain rule, we have
$\left(  PT\mathbf{y}(x)\right)  ^{^{\prime}}$ $=\left(  \overline
{\mathbf{y}(-x)}\right)  ^{^{\prime}}$ $=-\overline{\mathbf{y}^{\prime}(-x)}$
and $D\overline{\mathbf{y}(-x)}=i\left(  \overline{\mathbf{y}^{\prime}%
(-x)}\right)  .$ Hence,
\[
\text{ }\left(  P_{k}(x)D^{n-k}\right)  PT\mathbf{y}(x)=i^{n-k}P_{k}%
(x)\overline{\mathbf{y}^{(n-k)}(-x)}.
\]
On the other hand, for the right-hand side of (10), we have%
\begin{align*}
PT\left(  P_{k}(x)D^{n-k}\mathbf{y}(x)\right)   &  =PT\left(  P_{k}%
(x)(-i)^{n-k}\mathbf{y}^{(n-k)}(x)\right)  =\\
P(\overline{P_{k}(x)}(i)^{n-k}\overline{\mathbf{y}^{(n-k)}(x)})  &
=(\overline{P_{k}(-x)}(i)^{n-k}\overline{\mathbf{y}^{(n-k)}(-x)}).
\end{align*}
Therefore, (10) holds due to the equality $\overline{P_{k}(-x)}=P_{k}(x).$
Thus, the sufficiency is proved.

Now suppose that $\overline{P_{k}(-x)}-P_{k}(x)\neq O$ for $k\in\left\{
k_{\,1},k_{2},...,k_{s}\right\}  ,$ but $L(n,m)$ is a PT-symmetric operator,
where $0\leq k_{\,1}<k_{2}<...<k_{s}\leq n.$ Then, by Definition 1, we have
\[
PTL(n,m)\mathbf{y}(x)-L(n,m)PT\mathbf{y}(x)\mathbf{=}\sum_{j=1}^{s}%
(i)^{n-k_{j}}\left(  (\overline{P_{k_{j}}(-x)}-P_{k_{j}}(x))\overline
{\mathbf{y}^{n-k_{j}}(-x)}\right)  .
\]
Therefore, the equality
\begin{equation}
PTL(n,m)\mathbf{y}(x)=L(n,m)PT\mathbf{y}(x) \tag{11}%
\end{equation}
is satisfied only for those functions $\mathbf{y}(x)$ for which $\overline
{\mathbf{y}(-x)}$ is a solution of the equation
\[
\sum_{j=1}^{s}(i)^{n-k_{j}}\left(  (\overline{P_{k_{j}}(-x)}-P_{k_{j}%
}(x))\overline{\mathbf{y}^{n-k_{j}}(-x)}\right)  =0.
\]
Since the set of solutions of this equation is a finite dimensional subset $E$
of $D(L(n,m))$, (11) is not satisfied for any $\mathbf{y}\in\left(
D(L(n,m))\backslash E\right)  .$ This means that $L(n,m)$ is not a
PT-symmetric operator. The theorem is proved.
\end{proof}

Now, using Theorem 1 and Definition 1, we prove the following property of the
eigenvalues of the operator $L_{t}(n,m)$ for arbitrary $n$ and $m$.

\begin{theorem}
If $\lambda$ is an eigenvalue of multiplicity $v$ of the operator
$L_{t}(n,m),$ and (4) holds, then $\overline{\lambda}$ is also an eigenvalue
of multiplicity $v$ of $L_{t}(n,m).$
\end{theorem}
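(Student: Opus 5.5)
The plan is to realize the PT operator as an antilinear bijection between the root subspaces of $L_{t}(n,m)$ at $\lambda$ and at $\overline{\lambda}$. The key point is that $PT$ preserves the quasiperiodic boundary conditions (6) with the same $t$, once eigenfunctions are viewed as functions on the whole line.

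\textbf{Step 1 (extension).} Let $\mathbf{y}$ satisfy $l(n,m)\mathbf{y}=\lambda\mathbf{y}$ on $[0,\pi]$ together with (6). Since the coefficients are $\pi$-periodic by (4), the solution of $l(n,m)\mathbf{y}=\lambda\mathbf{y}$ on $\mathbb{R}$ with the same Cauchy data at $0$ satisfies
\[
\mathbf{y}^{(j)}(x+\pi)=e^{i\pi t}\mathbf{y}^{(j)}(x)
\]
for all $x\in\mathbb{R}$ and $j=0,1,\ldots,n-1$. This follows because $\mathbf{y}(x+\pi)$ and $e^{i\pi t}\mathbf{y}(x)$ solve the same equation and, by (6), have the same Cauchy data at $0$. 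The same extension applies to root vectors, i.e. solutions of $(l(n,m)-\lambda)^{k}\mathbf{y}=0$ with (6) imposed on each intermediate function $(l(n,m)-\lambda)^{j}\mathbf{y}$, $j<k$. This is done by induction on $k$, using uniqueness for the inhomogeneous Cauchy problem.

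\textbf{Step 2 (PT preserves the boundary conditions).} Set $\mathbf{z}(x)=PT\mathbf{y}(x)=\overline{\mathbf{y}(-x)}$. Then $\mathbf{z}^{(j)}(x)=(-1)^{j}\overline{\mathbf{y}^{(j)}(-x)}$, and
\[
\mathbf{z}^{(j)}(x+\pi)=(-1)^{j}\overline{\mathbf{y}^{(j)}(-x-\pi)}=(-1)^{j}\overline{e^{-i\pi t}\mathbf{y}^{(j)}(-x)}=e^{i\pi t}\mathbf{z}^{(j)}(x).
\]
Hence the restriction of $\mathbf{z}$ to $[0,\pi]$ satisfies (6) with the same $t$. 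By Theorem 1, (4) gives $l(n,m)PT=PT\,l(n,m)$. Since $PT$ is antilinear,
\[
(l(n,m)-\overline{\lambda})^{k}\mathbf{z}=PT\,(l(n,m)-\lambda)^{k}\mathbf{y}.
\]
Therefore $PT$ maps the eigenspace and every root space of $L_{t}(n,m)$ at $\lambda$ into the corresponding space at $\overline{\lambda}$.

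\textbf{Step 3 (equal multiplicities).} $PT$ is an antilinear involution, so applying the same argument with $\overline{\lambda}$ in place of $\lambda$ shows the map is a bijection between the root subspaces. An antilinear bijection sends linearly independent sets to linearly independent sets, because a relation $\sum c_{i}PT\mathbf{y}_{i}=0$ gives $\sum\overline{c_{i}}\mathbf{y}_{i}=0$. Hence the dimensions, that is, the algebraic multiplicities $v$, coincide, and the same holds for the geometric multiplicities. Alternatively, one could note that the characteristic determinant satisfies $\Delta(\overline{\lambda})=c\,\overline{\Delta(\lambda)}$, which gives equal orders of zeros. I would mention this only as a check.

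I expect the main obstacle to be Step 1 for root vectors: making rigorous that the boundary conditions (6) on the chain of intermediate functions pass to the whole-line quasiperiodic extension. I would handle this by defining root vectors through the operator domain and extending each element of the chain inductively via the Cauchy problem.
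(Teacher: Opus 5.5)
Your proposal is correct and follows essentially the same route as the paper. Like the paper, you extend the eigenfunction to a quasiperiodic solution on $\mathbb{R}$ by the uniqueness theorem, use Theorem 1 to commute $PT$ with $l(n,m)$, check that $\overline{\mathbf{\Psi}(-x)}$ satisfies (6) with the same real $t$, and then carry the argument over to associated functions; your explicit antilinear-involution argument on the root subspaces makes rigorous the paper's brief ``repeating this process'' step for equal multiplicities (and in your optional check via $\Delta$, the factor $c$ is in general a nonvanishing function of $\lambda$ rather than a constant, which does not affect the conclusion).
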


\begin{proof}
Let $\mathbf{\Psi}(\cdot,\lambda)$ be an eigenfunction corresponding to the
eigenvalue $\lambda.$ This means that $\mathbf{\Psi}(\cdot,\lambda)$ is a
solution equation of
\begin{equation}
l(n,m)\mathbf{\Psi}(\cdot,\lambda)=\lambda\mathbf{\Psi}(\cdot,\lambda)
\tag{12}%
\end{equation}
satisfying (6). Then $\mathbf{\Psi}(x+\pi,\lambda)$ is also a solution of
(12). Moreover $\mathbf{y}(x,\lambda):=\mathbf{\Psi}(x+\pi,\lambda
)-e^{it}\mathbf{\Psi}(x,\lambda)$ is a solution of (12) satisfying the initial
conditions
\[
\mathbf{y}(0)=\mathbf{y}^{\prime}(0)=...=\mathbf{y}^{(n-1)}(0)=0.
\]
Then, by the uniqueness theorem, we obtain
\begin{equation}
\mathbf{\Psi}(x+\pi,\lambda)=e^{it}\mathbf{\Psi}(x,\lambda) \tag{13}%
\end{equation}
for all $x\in(-\infty,\infty).$ Since $PT$ is an invertible operator, it
follows from (12) that
\[
PTl(n,m)\mathbf{\Psi}(\cdot,\lambda)=\overline{\lambda}PT\mathbf{\Psi}%
(\cdot,\lambda)
\]
Using Theorem 1 and Definition 1, we conclude that
\begin{equation}
l(n,m)PT\mathbf{\Psi}(\cdot,\lambda)=\overline{\lambda}PT\mathbf{\Psi}%
(\cdot,\lambda), \tag{14}%
\end{equation}
where $PT\mathbf{\Psi}(x,\lambda)=\overline{\mathbf{\Psi}(-x,\lambda
)}=:\mathbf{\Phi}(x,\lambda).$ Moreover, it follows from (13) that
\[
\mathbf{\Phi}(x+\pi,\lambda)=\overline{\mathbf{\Psi}(-x-\pi,\lambda
)}=\overline{e^{-it}\mathbf{\Psi}(-x,\lambda)}=e^{it}\mathbf{\Phi}(x).
\]
Thus, $PT\mathbf{\Psi}(\cdot,\lambda)$ is an eigenfunction of $\ L_{t}(n,m)$
corresponding to the eigenvalue $\overline{\lambda}.$

Instead of equation (12), using the equation
\[
\left(  l(n,m)-\lambda I\right)  \mathbf{\Psi}_{1}(\cdot,\lambda
)=\mathbf{\Psi}(\cdot,\lambda)
\]
for the associated function $\mathbf{\Psi}_{1}(x,\lambda)$ and repeating the
above argument, we conclude that if $\mathbf{\Psi}_{1}(x,\lambda)$ is an
associated function corresponding to the eigenfunction $\mathbf{\Psi
}(x,\lambda),$ then $\overline{\mathbf{\Psi}_{1}(-x,\lambda)}$ \ is an
associated eigenfunction corresponding to the eigenfunction $\overline
{\mathbf{\Psi}(-x,\lambda)}.$ Repeating this process, we obtain that the
multiplicities of the eigenvalues $\lambda$ and $\overline{\lambda}$ are the
same. The theorem is proved.
\end{proof}

Now, using Theorems 1 and 2, we prove the main result of this paper for the
operator $L\left(  \widetilde{P}\right)  $. First, we consider asymptotic
estimates for the eigenvalues of $L_{t}\left(  \widetilde{P}\right)  \ $that
are uniform with respect to $t$. A number $\lambda$ is an eigenvalue of
$L_{t}\left(  \widetilde{P}\right)  $ if and only if it is a root of the
characteristic equation
\begin{equation}
\Delta(\lambda,t)=\left\vert
\begin{array}
[c]{cc}%
e_{11}(\pi,\lambda)-e^{i\pi t} & e_{21}(\pi,\lambda)\\
e_{12}(\pi,\lambda) & e_{22}(\pi,\lambda)-e^{i\pi t}%
\end{array}
\right\vert =0. \tag{15}%
\end{equation}
Using (8), we obtain the equality
\begin{equation}
\Delta(\lambda,t)=e^{i2\pi t}-e^{i\pi t}2\cos\pi\lambda+1+o(1)=0, \tag{16}%
\end{equation}
as $\lambda\rightarrow\infty.$ Thus, the eigenvalues $\lambda(t)$ of
$L_{t}\left(  \widetilde{P}\right)  $ lying in the strip $\left\vert
\operatorname{Im}\lambda\right\vert <a$ are the roots of the equation%
\begin{equation}
g(\lambda,t):=\cos\pi\lambda-\cos\pi t+e^{-i\pi t}f(\lambda)=0, \tag{17}%
\end{equation}
where $f(\lambda)$ is an entire function satisfying $f(\lambda)\rightarrow0,$
as $\lambda\rightarrow\infty$ and $\left\vert \operatorname{Im}\lambda
\right\vert <a.$

Let $\left\{  \varepsilon_{k}:k=1,2,...\right\}  $ and $\left\{  \delta
_{k}:k=1,2,...\right\}  $ be sequences such that $\varepsilon_{k}\rightarrow0$
and $\delta_{k}\rightarrow0$ as $k\rightarrow\infty,$ and
\begin{equation}
\frac{9}{2}\varepsilon_{k}^{2}>\delta_{k},\text{ }\left\vert f(\lambda
)\right\vert <\delta_{k}, \tag{18}%
\end{equation}
for $\lambda\in D_{k}:=\left\{  \lambda\in\mathbb{C}:\operatorname{Re}%
\lambda\in\lbrack2k,2k+2],\text{ }\left\vert \operatorname{Im}\lambda
\right\vert \leq\varepsilon_{k}\right\}  $ and $\left\vert k\right\vert >N,$
where $N$ is a sufficiently large natural number. Using (17) and Rouch\'{e}'s
theorem, we prove that there exist positive integer $N$ such that equations
(17) and%
\begin{equation}
\cos\pi\lambda-\cos\pi t=0 \tag{19}%
\end{equation}
\ have the same number of zeros inside each of the circles
\[
C(2k\pm t,\varepsilon_{k})=\left\{  \mu\in\mathbb{C}:\left\vert \mu-(2k\pm
t)\right\vert =\varepsilon_{k}\right\}
\]
for all $\left\vert k\right\vert >N$ and $t\in T(k):=\left(  [\varepsilon
_{k},1-\varepsilon_{k}]\cup\lbrack1+\varepsilon_{k},2-\varepsilon_{k}]\right)
.$ For this purpose, we prove that
\begin{equation}
\left\vert f(\lambda)\right\vert <\left\vert \cos\pi\lambda-\cos\pi
t\right\vert \tag{20}%
\end{equation}
for all%
\begin{equation}
\lambda\in C(2k\pm t,\varepsilon_{k}),\text{ }t\in T(k). \tag{21}%
\end{equation}
Using the trigonometric identity
\[
\left\vert \cos\pi\lambda-\cos\pi(2k\pm t)\right\vert =2\left\vert \sin
\frac{\pi}{2}(\lambda-(2k\pm t))\right\vert \left\vert \sin\frac{\pi}%
{2}(\lambda+(2k\pm t))\right\vert
\]
and the representation $\lambda=2k\pm t+e^{i\alpha}\varepsilon_{k}$,
$\alpha\in\lbrack0,2\pi)$ of $\lambda\in C(2k\pm t,\varepsilon_{k}),$ we
obtain
\begin{equation}
\left\vert \cos\pi\lambda-\cos\pi(2k\pm t)\right\vert =2\left\vert \sin
\frac{\pi}{2}e^{i\alpha}\varepsilon_{k}\right\vert \left\vert \sin(\frac{\pi
}{2}e^{i\alpha}\varepsilon_{k}\pm\pi t)\right\vert . \tag{22}%
\end{equation}
Now, let us estimate the right side of (22) for $t\in T(k).$ Using the
Maclaurin's series for $\sin z,$ we obtain
\begin{equation}
\sin\frac{\pi}{2}e^{i\alpha}\varepsilon_{k}=\frac{\pi}{2}e^{i\alpha
}\varepsilon_{k}+O(\varepsilon_{k}^{3}),\text{ }\left\vert \sin\frac{\pi}%
{2}e^{i\alpha}\varepsilon_{k}\right\vert >\frac{3}{2}\varepsilon_{k}. \tag{23}%
\end{equation}
If $t\in\lbrack\varepsilon_{k},1-\varepsilon_{k}],$ then $\pi t\in\lbrack
\pi\varepsilon_{k},\pi-\pi\varepsilon_{k}].$ Therefore, using the Taylor
expansion of $\sin z$ about a point $t_{0}\in\lbrack\pi\varepsilon_{k},\pi
-\pi\varepsilon_{k}]$, we conclude that
\begin{equation}
\left\vert \sin(\frac{\pi}{2}e^{i\alpha}\varepsilon_{k}\pm\pi t)\right\vert
>\left\vert \sin t_{0}\right\vert -\left\vert \frac{\pi}{2}e^{i\alpha
}\varepsilon_{k}\right\vert -\left\vert O(\varepsilon_{k}^{2})\right\vert
>\frac{3}{2}\varepsilon_{k}. \tag{24}%
\end{equation}
In the same way, we prove (24) for all $t\in T(k).$ It follows from (22)-(24)
\[
\left\vert \cos\pi\lambda-\cos\pi t\right\vert >\frac{9}{2}\varepsilon_{k}%
^{2}.
\]
if (21) holds. Therefore, (20) follows from (18). Since equation (19) has a
unique root inside circle $C(2k\pm t,\varepsilon_{k})$, Rouch\'{e}'s theorem
implies that equation (17) also has a unique root inside this circle. Thus, we
have proved the following:

\begin{theorem}
There exists a positive constant $N$ such that the operator $L_{t}\left(
\widetilde{P}\right)  $ has a unique eigenvalue ,counting multiplicities,
inside the circle $C(2k\pm t,\varepsilon_{k})$, for $t\in T(k)$ and
$\left\vert k\right\vert >N.$
\end{theorem}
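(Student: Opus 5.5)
The proof will be a Rouché argument comparing the characteristic equation (17) with the unperturbed equation (19) on each circle $C(2k\pm t,\varepsilon_{k})$. This essentially formalizes the computation preceding the statement.

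\emph{Reduction to (17).} Eigenvalues of $L_{t}(\widetilde{P})$ are exactly the zeros of the entire function $\Delta(\lambda,t)$ in (15), and multiplicity is understood as the order of the zero. First I would justify (16): substituting (8) into (15) gives
\[
\Delta(\lambda,t)=e^{i2\pi t}-e^{i\pi t}(e^{i\pi\lambda}+e^{-i\pi\lambda})+e^{i\pi\lambda}e^{-i\pi\lambda}+o(1),
\]
uniformly in $t$, because the $\rho_{kl}(\pi,\lambda)$ do not depend on $t$ and $|e^{i\pi t}|=1$. Multiplying by $e^{-i\pi t}/2$ then gives $g(\lambda,t)$ with a $t$-independent remainder. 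One small point needs care: written honestly, the remainder is $e^{-i\pi t}$ times a combination of the $\rho$'s and $e^{\pm i\pi\lambda}$, plus a term like $-(e^{i\pi\lambda}\rho_{22}+e^{-i\pi\lambda}\rho_{11})/2$. What matters for Rouché is only a bound $|F(\lambda,t)|\le\delta_{k}$ on $D_{k}$, uniformly in $t$. I would therefore define $\delta_{k}:=\sup_{\lambda\in D_{k},t}|F|$, which tends to $0$ by (8), using the strip $|\operatorname{Im}\lambda|<a$ with $a\ge\sup\varepsilon_{k}$. I would then choose $\varepsilon_{k}\to0$ so slowly that $\frac{9}{2}\varepsilon_{k}^{2}>\delta_{k}$, for instance $\varepsilon_{k}=\max(\delta_{k}^{1/3},1/k)$ for large $k$. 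This fixes the sequences in (18).

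\emph{Lower bound on the circle.} For $\lambda\in C(2k\pm t,\varepsilon_{k})$ and $t\in T(k)$, I would use the factorization (22). The first factor satisfies $|\sin(\frac{\pi}{2}\varepsilon_{k}e^{i\alpha})|\ge\frac{\pi}{2}\varepsilon_{k}-O(\varepsilon_{k}^{3})>\frac{3}{2}\varepsilon_{k}$ for large $k$. The second factor $|\sin(\pi t\pm\frac{\pi}{2}\varepsilon_{k}e^{i\alpha})|$ is the main obstacle, because uniformity in $t$ is needed near the endpoints of $T(k)$. I would use $|\sin(w+z)|\ge|\sin w|-|z|\cosh(\operatorname{Im}(w+z))$ together with $|\sin\pi t|\ge\sin(\pi\varepsilon_{k})\ge2\varepsilon_{k}$, valid since $\operatorname{dist}(t,\mathbb{Z})\ge\varepsilon_{k}$. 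This gives a bound of the form $\ge c\,\varepsilon_{k}$. If the constant is tight, I would shrink the circle radius relative to the distance of $t$ from $\mathbb{Z}$, for example by using radius $\varepsilon_{k}/2$ while keeping $t\in T(k)$, and absorb the resulting constant into the choice of $\varepsilon_{k}$ in (18). The outcome is $|\cos\pi\lambda-\cos\pi t|\ge c\,\varepsilon_{k}^{2}>\delta_{k}\ge|F|$ on the circle, which is (20).

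\emph{Counting zeros.} Rouché's theorem then says $g(\cdot,t)$ and $\cos\pi\lambda-\cos\pi t$ have the same number of zeros inside the circle, counted with multiplicity. The zeros of the latter are $2j\pm t$, $j\in\mathbb{Z}$. For $t\in T(k)$ they are simple, since $\sin\pi t\neq0$, and mutually separated by at least $2\varepsilon_{k}$, because $|2t-2j|\ge2\varepsilon_{k}$. Hence exactly one of them, namely $2k\pm t$, lies inside the circle, and I conclude that $L_{t}(\widetilde{P})$ has exactly one eigenvalue there for $|k|>N$.
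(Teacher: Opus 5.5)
Your proposal is correct and is essentially the paper's own argument: Rouch\'{e}'s theorem comparing (17) with (19) on $C(2k\pm t,\varepsilon_{k})$, using the factorization (22), lower bounds of order $\varepsilon_{k}$ for both sine factors that are uniform in $t\in T(k)$, and the fact that (19) has exactly one simple root inside the circle. One remark: in your inequality $|\sin(w+z)|\ge|\sin w|-|z|\cosh(\operatorname{Im}z)$, use $\sin(\pi\varepsilon_{k})=\pi\varepsilon_{k}+O(\varepsilon_{k}^{3})$ instead of the cruder $\sin(\pi\varepsilon_{k})\ge 2\varepsilon_{k}$; this gives $\frac{\pi}{2}\varepsilon_{k}-O(\varepsilon_{k}^{3})>\frac{3}{2}\varepsilon_{k}$ for the second factor, which is exactly the paper's (24), so the constant $\frac{9}{2}$ in (18) is kept and your radius-shrinking fallback, which would change the statement, is unnecessary.
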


Now, using this theorem, we consider the spectrum of PT-symmetric operator
$L\left(  \widetilde{P}\right)  .$

\begin{theorem}
For $\left\vert k\right\vert >N$ the intervals $[2k+2\varepsilon
_{k},2k+1-2\varepsilon_{k}]$ and $[2k+1+2\varepsilon_{k},2k+2-2\varepsilon
_{k}]$ belong to the spectrum of the operator $L\left(  \widetilde{P}\right)
,$ where $N$ is defined in Theorem 3. Consequently, the main part of the real
axis belongs to the spectrum $\sigma(L\left(  \widetilde{P}\right)  )$ of
$L\left(  \widetilde{P}\right)  $ in the sense that
\begin{equation}
\lim_{s\rightarrow\infty}\frac{\mu(\sigma(L\left(  \widetilde{P}\right)
)\cap(-2s,2s))}{4s}=1. \tag{25}%
\end{equation}

\end{theorem}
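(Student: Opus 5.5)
The plan is to combine Theorem 3 with the symmetry property of Theorem 2. Fix $|k|>N$ and $t\in T(k)$. By Theorem 1 the operator $L(\widetilde{P})$ is PT-symmetric, because the entries of $\widetilde{P}$ are PT-symmetric by (9) and $P_0=\mathrm{diag}(1,-1)$ has constant real entries. Hence Theorem 2 applies to $L_t(\widetilde{P})$: together with any eigenvalue $\lambda$ of $L_t(\widetilde{P})$, the number $\overline{\lambda}$ is also an eigenvalue, with the same multiplicity.

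By Theorem 3, the disk bounded by $C(2k+t,\varepsilon_k)$ contains exactly one eigenvalue $\lambda_k(t)$, counted with multiplicity. This disk is centered at the real point $2k+t$, so it is invariant under complex conjugation. Therefore $\overline{\lambda_k(t)}$ lies in the same disk and is again an eigenvalue. Uniqueness forces $\overline{\lambda_k(t)}=\lambda_k(t)$, so $\lambda_k(t)$ is real. The same argument applies to the disk around $2k-t$. Since $\sigma(L(\widetilde{P}))=\bigcup_{t}\sigma(L_t(\widetilde{P}))$, each real number $\lambda_k(t)$, for $t\in T(k)$, belongs to the spectrum.

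The next step is to show that these real eigenvalues sweep out the stated intervals. Consider first $t\in[\varepsilon_k,1-\varepsilon_k]$. The map $t\mapsto\lambda_k(t)$ is continuous: the root of $g(\cdot,t)$ inside the circle is simple, and $g$ depends continuously on $t$, so this follows from the argument principle or the implicit function theorem. It is also real-valued and satisfies $|\lambda_k(t)-(2k+t)|<\varepsilon_k$. Hence $\lambda_k(\varepsilon_k)<2k+2\varepsilon_k$ and $\lambda_k(1-\varepsilon_k)>2k+1-2\varepsilon_k$. By the intermediate value theorem, the image of $[\varepsilon_k,1-\varepsilon_k]$ contains $[2k+2\varepsilon_k,2k+1-2\varepsilon_k]$. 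For the second interval, use $t\in[1+\varepsilon_k,2-\varepsilon_k]$ and the circle around $2k+t$ in the same way. This gives the interval $[2k+1+2\varepsilon_k,2k+2-2\varepsilon_k]$, perhaps after shifting $k$ by one and using the fact that $2k-t$ ranges over the adjacent cell. Alternatively, one can use the circles around $2k+2-t$ for $t\in[\varepsilon_k,1-\varepsilon_k]$.

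\textbf{Main obstacle.} I expect continuity of $\lambda_k(t)$ in $t$ to be the main technical point. The plan is to express the unique root through the contour integral
\[
\lambda_k(t)=\frac{1}{2\pi i}\oint_{C(2k+t,\varepsilon_k)}\frac{\mu\, g'_\mu(\mu,t)}{g(\mu,t)}\,d\mu .
\]
On a small $t$-interval the circle can be held fixed, because $|g|$ is bounded away from zero on it by (20). The integrand is then continuous in $t$, so $\lambda_k(t)$ is continuous.

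For (25), the part of $(-2s,2s)$ with $|k|\le N$ has fixed measure $O(1)$. The part not covered in the cells with $|k|>N$ has measure $8\varepsilon_k$ per cell. Since $\varepsilon_k\to0$, the average $\frac{1}{4s}\sum_{N<|k|<s}8\varepsilon_k$ tends to $0$ by a Cesàro argument. Hence the covered measure divided by $4s$ tends to $1$.
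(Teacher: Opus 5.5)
Your proposal is correct and follows essentially the same route as the paper. It gets realness of $\lambda_k(t)$ from Theorem 2 combined with the uniqueness in Theorem 3 for a conjugation-invariant disk, then continuity in $t$, the endpoint bounds $\lambda_k(\varepsilon_k)<2k+2\varepsilon_k$ and $\lambda_k(1-\varepsilon_k)>2k+1-2\varepsilon_k$, and finally (25), where your Ces\`{a}ro argument supplies the detail the paper omits. The only variation is that you prove continuity with the argument-principle integral over a fixed circle rather than the implicit function theorem; like the paper, you should add one line noting that the root inside the fixed circle $C(2k+t_0,\varepsilon_k)$ stays strictly inside the moving circle $C(2k+t,\varepsilon_k)$ for $t$ near $t_0$, so it equals $\lambda_k(t)$ by uniqueness.
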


\begin{proof}
By Theorem 3, for $t\in T(k)$ and $\left\vert k\right\vert >N,$ there exists
only one eigenvalue of the operator $L_{t}\left(  \widetilde{P}\right)  $
inside the circle $C(2k+t,\varepsilon_{k}).$ Denote this eigenvalue by
$\lambda_{k}(t).$ If $\operatorname{Im}\lambda_{k}(t)\neq0,$ then, by Theorem
2, the eigenvalue $\overline{\lambda_{k}(t)}$ also lies inside the circle
$C(2k+t,\varepsilon_{k}).$ Then this circle contains at least two eigenvalue
of $L_{t}\left(  \widetilde{P}\right)  ,$ which contradicts Theorem 3. Thus
$\lambda_{k}(t)$ is areal number.

Now, we prove that $\lambda_{k}(t)$ depends continuously on $t$ in the
intervals $(\varepsilon_{k},1-\varepsilon_{k})$ and $(1+\varepsilon
_{k},2-\varepsilon_{k}).$ Take any point $t_{0}$ in one of these intervals,
say, $t_{0}\in(\varepsilon_{k},1-\varepsilon_{k}).$ Since $\lambda_{k}(t_{0})$
is a simple eigenvalue, it is a simple root of the equation (17). Thus,
$\frac{\partial g(\lambda,t)}{\partial\lambda}$ does not vanish at the point
$\left(  \lambda_{k}(t_{0}),t_{0}\right)  .$ Therefore, by the implicit
function theorem, there exists a sufficiently small number $\gamma_{k}$ and a
function $\lambda(t)$ such that $\left(  t_{0}-\gamma_{k},t_{0}+\gamma
_{k}\right)  \subset(\varepsilon_{k},1-\varepsilon_{k}),$ $\lambda(t)$ is
holomorphic function on $\left(  t_{0}-\gamma_{k},t_{0}+\gamma_{k}\right)  $
and $\lambda(t_{0})=\lambda_{k}(t_{0}).$ On the other hand, the centers of the
circles $C(2k+t,\varepsilon_{k})$ depend continuously on $t.$ Therefore, there
exist a neighborhood of $t_{0}$ in which $\lambda(t)$ coincides with
$\lambda_{k}(t)$. This implies that $\lambda_{k}(t)$ is continuous at $t_{0}.$
Hence $\lambda_{k}((\varepsilon_{k},1-\varepsilon_{k}))$ is an interval
contained in the spectrum of $L\left(  \widetilde{P}\right)  $. Since
$\lambda_{k}(\varepsilon_{k})$ and $\lambda_{k}(1-\varepsilon_{k})$ lie,
respectively, inside $C(2k+\varepsilon_{k},\varepsilon_{k})$ and
$C(2k+1-\varepsilon_{k},\varepsilon_{k}),$ we have
\[
\lambda_{k}(\varepsilon_{k})<2k+2\varepsilon_{k}\text{ and }\lambda
_{k}(1-\varepsilon_{k})>2k+1-2\varepsilon_{k}.\text{ }%
\]
Therefore, the interval $[2k+2\varepsilon_{k},2k+1-2\varepsilon_{k}]$ belongs
to the spectrum of the operator $L\left(  \widetilde{P}\right)  $ for all
$\left\vert k\right\vert >$ $N$. In the same way, we prove that
$[2k+1+2\varepsilon_{k},2k+2-2\varepsilon_{k}]\subset\sigma(L\left(
\widetilde{P}\right)  ).$ Thus, (25) holds, and the theorem is proved.
\end{proof}

Now, we consider $L(P)$ by using the following statement from [25].

\textit{Statement 1.3. (Savchuk, A.M., Sadovnichaya, I.V.) Let }$P(x)=\left(
\begin{array}
[c]{cc}%
p_{1}(x) & p_{2}(x)\\
p_{3}(x) & p_{4}(x)
\end{array}
\right)  $\textit{ be an arbitrary }$2\times2$\textit{ matrix with entries
}$p_{j}$\textit{ }$\in L_{1}[0,\pi]$ for $j=1,2,3,4$\textit{, and
}$C\mathbf{y}(0)+D\mathbf{y}(\pi)=0$\textit{ be the regular boundary
conditions, where }$C$\textit{ and }$D$ are $2\times2$ matrices.\textit{ Then
the operator }$L_{P,C,D}$\textit{ defined by differential expression (1) and
the above boundary condition is similar to the operator }$L_{\widetilde
{P},C,\widetilde{D}}+\gamma I$\textit{, where }$L_{\widetilde{P}%
,C,\widetilde{D}}$ \textit{is generated by the expression}
\[
l\left(  \widetilde{P}\right)  =\left(
\begin{array}
[c]{cc}%
-i & 0\\
0 & i
\end{array}
\right)  \mathbf{y}^{^{\prime}}(x)+\left(
\begin{array}
[c]{cc}%
0 & \widetilde{p_{2}}(x)\\
\widetilde{p_{3}}(x) & 0
\end{array}
\right)  \mathbf{y}(x)
\]
\textit{and boundary condition} $C\mathbf{y}(0)+\widetilde{D}\mathbf{y}%
(\pi)=0$\textit{.} \textit{Here}%
\[
\widetilde{p}_{2}(x)=p_{2}(x)e^{i(\varphi(x)-\psi(x))},\text{ }\widetilde
{p}_{3}(x)=p_{3}(x)e^{-i(\varphi(x)-\psi(x))},\text{ }\varphi(x)=\gamma
x-\int_{0}^{x}p_{1}(t)dt,
\]

\textit{and}
\[
\psi(x)=\int_{0}^{x}p_{4}(t)dt-\gamma x,\text{ \ }\gamma=\frac{1}{2\pi}%
\int_{0}^{\pi}\left(  p_{1}(t)+p_{4}(t)\right)  dt,\widetilde{\text{ }%
D}=e^{\frac{i}{2}\int_{0}^{\pi}\left(  p_{1}(t)-p_{4}(t)\right)  dt}D.
\]

In the case of boundary condition (2), we have $C=e^{i\pi t}I,$ and
$D=-I,$where $I$ is the unit matrix. Then $\widetilde{D}=e^{i\pi\tau}I,$
where
\begin{equation}
\tau=\frac{1}{2\pi}\int_{0}^{\pi}\left(  p_{1}(t)-p_{4}(t)\right)  dt.
\tag{26}%
\end{equation}
Therefore $L_{\widetilde{P},C,\widetilde{D}}$ is the operator generated by the
expression $l\left(  \widetilde{P}\right)  $ and the boundary condition
\begin{equation}
y(\pi)=e^{i\pi(t-\tau)}y(0). \tag{27}%
\end{equation}
In other words, $L_{\widetilde{P},C,\widetilde{D}}$ is $L_{t-\tau}\left(
\widetilde{P}\right)  .$ It is important to note that if $p$ is PT-symmetric
then it is obvious that
\begin{equation}
\overline{\int_{0}^{\pi}p(t)dt}=\int_{0}^{\pi}\overline{p(t)}dt=\int_{0}^{\pi
}\overline{p(-t)}dt=\int_{0}^{\pi}p(t)dt, \tag{28}%
\end{equation}
and hence $\int_{0}^{\pi}p(t)dt$ is a real number. Therefore, $\gamma$ and
$\tau$ are real numbers. Since $L_{t}\left(  \widetilde{P}\right)
=L_{t+2}\left(  \widetilde{P}\right)  ,$ (see boundary conditions (2)) and the
spectrum of $L\left(  \widetilde{P}\right)  $ is the union of the spectra of
$L_{t}\left(  \widetilde{P}\right)  $ for $t\in\lbrack0,2),$ we have
\[
\sigma\left(  L\left(  \widetilde{P}\right)  \right)  =%
{\textstyle\bigcup\limits_{t\in\lbrack0,2)}}
\sigma\left(  L_{t}\left(  \widetilde{P}\right)  \right)  =%
{\textstyle\bigcup\limits_{t\in\lbrack0,2)}}
\sigma\left(  L_{t-\tau}\left(  \widetilde{P}\right)  \right)  =%
{\textstyle\bigcup\limits_{t\in\lbrack0,2)}}
\left(  \gamma+\sigma\left(  L_{t}\left(  P\right)  \right)  \right)
=\gamma+\sigma\left(  L\left(  P\right)  \right)  .
\]
Therefore the following statement is a consequence of Theorem 4.

\begin{theorem}
For $\left\vert k\right\vert >N$, the intervals $(2k+\varepsilon_{k}%
+\gamma,2k+1-\varepsilon_{k}+\gamma)$ and $(2k+\varepsilon_{k}+\gamma
,2k+2-\varepsilon_{k}+\gamma)$ belong to the spectrum of the operator
$L\left(  \widetilde{P}\right)  $ and (25) continues to hold if $\widetilde
{P}$ is replaced by $P,$ where $N$ is defined in Theorem 3.
\end{theorem}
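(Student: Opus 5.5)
The plan is to transfer Theorem 4 from $L\left(\widetilde{P}\right)$ to $L(P)$ through the similarity of Statement 1.3, using the identity $\sigma\left(L\left(\widetilde{P}\right)\right)=\gamma+\sigma(L(P))$ established just before the statement. The two ingredients that make this work for PT-symmetric $P$ come first. One is that $\gamma$ and $\tau$ are real, by (28). The other is that the transformed potential $\widetilde{P}$ again has PT-symmetric entries, so that Theorems 2–4 apply to $L\left(\widetilde{P}\right)$.

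\textbf{Step 1: PT-symmetry of $\widetilde{p_2},\widetilde{p_3}$.} Write $\varphi(x)-\psi(x)=2\gamma x-\int_0^x(p_1+p_4)(s)\,ds$. Put $h(x)=\int_0^x q(s)ds$ with $q=p_1+p_4$ PT-symmetric. Substituting $s\to -s$ gives
$$\overline{h(-x)}=-\int_0^{x}\overline{q(-s)}\,ds=-h(x).$$
Since $\gamma$ is real, $\theta:=\varphi-\psi$ satisfies $\overline{\theta(-x)}=-\theta(x)$. Hence $\overline{e^{i\theta(-x)}}=e^{-i\overline{\theta(-x)}}=e^{i\theta(x)}$, so $e^{\pm i\theta}$ are PT-symmetric. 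Products of PT-symmetric functions are PT-symmetric, so $\widetilde{p_2},\widetilde{p_3}$ are PT-symmetric and condition (9) holds.

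\textbf{Step 2: periodicity.} The choice of $\gamma$ makes $\theta(x+\pi)=\theta(x)$: the increment of $\int_0^x q$ over a period is $\int_0^\pi q=2\pi\gamma$, which cancels $2\gamma\pi$. Thus $\widetilde{P}$ is $\pi$-periodic, and the Floquet decomposition together with Theorems 3 and 4 are legitimately applied to $L\left(\widetilde{P}\right)$. The asymptotics (8) require only integrability, which is preserved since $|e^{\pm i\theta}|$ is bounded on $[0,\pi]$.

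\textbf{Step 3: transfer of the spectrum.} Statement 1.3 gives, for every $t$, that $L_t(P)$ is similar to $L_{t-\tau}\left(\widetilde{P}\right)+\gamma I$. Taking unions over $t\in[0,2)$ and using that $\tau$ is real, so $t\mapsto t-\tau$ permutes the period classes modulo $2$, yields
$$\sigma(L(P))=\gamma+\sigma\left(L\left(\widetilde{P}\right)\right).$$
(The displayed chain in the paper writes this with the sign of $\gamma$ arranged the other way. I would fix the convention from Statement 1.3, so that the intervals are shifted by $+\gamma$ as in the statement.) By Theorem 4, for $|k|>N$, the intervals $[2k+2\varepsilon_k,2k+1-2\varepsilon_k]$ and $[2k+1+2\varepsilon_k,2k+2-2\varepsilon_k]$ lie in $\sigma\left(L\left(\widetilde{P}\right)\right)$. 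Shifting by the real number $\gamma$ puts the corresponding intervals, with the $\varepsilon_k$-margins as in the statement (after possibly renaming $\varepsilon_k$), into $\sigma(L(P))$.

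\textbf{Step 4: the density limit (25) for $P$.} Since $\gamma$ is fixed and real,
$$\mu\bigl(\sigma(L(P))\cap(-2s,2s)\bigr)\ge \mu\Bigl(\sigma\bigl(L\bigl(\widetilde{P}\bigr)\bigr)\cap(-2s+|\gamma|,2s-|\gamma|)\Bigr).$$
The right-hand side is $4s-o(s)$ by (25), and the measure is trivially at most $4s$. Dividing by $4s$ gives the limit $1$.

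The main obstacle is Step 1: one must verify carefully that the gauge factor $e^{i(\varphi-\psi)}$ is PT-symmetric. Without this, Theorem 2 cannot be invoked for $\widetilde{P}$. The point that makes it work is precisely the reality of $\gamma$, which follows from (28).
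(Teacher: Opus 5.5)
Your proof is correct and follows the paper's own route. That route is the similarity $L_t(P)\sim L_{t-\tau}(\widetilde{P})+\gamma I$ from Statement 1.3, reality of $\gamma$ and $\tau$ via (28), the union over $t\in[0,2)$, and then Theorem 4; like you, one must read the garbled statement with the second interval starting at $2k+1$ and with margins matching Theorem 4.

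Your additions are also correct and improve on the paper. Steps 1--2 show that $\widetilde{p_2},\widetilde{p_3}$ are again $\pi$-periodic and PT-symmetric, which is exactly what the paper assumes tacitly when it applies Theorems 2--4 to $\widetilde{P}$. Your sign $\sigma(L(P))=\gamma+\sigma(L(\widetilde{P}))$ is the one that agrees with Statement 1.3 and with the $+\gamma$ in the statement; the paper's displayed chain puts $\gamma$ on the wrong side.
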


\section{Concluding Remarks}

First, let us discuss general PT-symmetric operators. The definition of
PT-symmetry for a general operator $A$ defined in some subspace $D(A)$ of a
function space $H$ is given by the relation
\begin{equation}
PT(Lf)=L(PTf) \tag{29}%
\end{equation}
for all $f$ $\in D(A)$ (see, for example [2]). We use this definition for
$L(n,m)$ (see Definition 1). To use (29) we should assume that $PTf\in D(A).$
Thus, it is natural to use the following definition.

\begin{definition}
We say that the operator $A$ is PT-symmetric if $PTf\in D(A)$ whenever $f\in
D(A),$ and
\begin{equation}
PTAf=APTf \tag{30}%
\end{equation}
for all $f\in D(A).$
\end{definition}

The domain of the differential operator $L(n,m,U)$ generated in the space
$L_{2}^{m}(a,b)$ by the differential expression (3) and the boundary
conditions
\begin{equation}
U_{1}\mathbf{y}=0,\text{ }U_{2}\mathbf{y}=0,...,U_{n}\mathbf{y}=0 \tag{31}%
\end{equation}
is the set of functions $\mathbf{f\in}\left(  W_{1}^{n}(a,b)\right)  ^{m}$
such that $l(n,m)\mathbf{f\in}L_{2}^{m}(a,b)$ and $U_{j}\mathbf{f}=0$ for
$j=1,2,...,n$. Therefore, in order to consider the PT-symmetry of the operator
$L(n,m,U)$ we have to determine whether $PT\mathbf{f}$ satisfy the same
boundary condition as $\mathbf{f}$, since $PT\mathbf{f}$ belongs $D(L(n,m,U))$
if and only if it satisfies the corresponding boundary conditions. As an
example, let us consider the boundary condition (2). If $\mathbf{y}$ satisfies
(2), then $e^{-itx}\mathbf{y}$ is a periodic function. Therefore, the domain
$D(L_{t}(P))$ of $L_{t}(P)$ can be defined as the set of functions of the form
$e^{itx}\mathbf{f,}$ where $\mathbf{f}$ is a periodic function. If $t$ is a
complex number, then
\[
PTe^{itx}\mathbf{f}(x)=P(e^{-i\overline{t}x}\overline{\mathbf{f}%
(x)})=e^{i\overline{t}x}\overline{\mathbf{f}(-x)}.
\]
Therefore, $PTe^{itx}\mathbf{f}(x)\in D(L_{t}(P))$ if and only if $t$ is real.
In this sense, $L_{t}(P)$ is PT-symmetric operator only for real $t$. It
follows from (26) that if
\begin{equation}
\operatorname{Im}\left(  \int_{0}^{\pi}\left(  p_{1}(t)-p_{4}(t)\right)
dt\right)  \neq0, \tag{32}%
\end{equation}
then $L_{t-\tau}\left(  \widetilde{P}\right)  $ is not a PT-symmetric
operator, since $\operatorname{Im}(t-\tau)\neq0$. Fortunately, the mean value
of the PT-symmetric function is a real number (see (28)). Therefore, if
$p_{1}$ and $p_{4}$ are PT-symmetric function, then $\operatorname{Im}%
(t-\tau)=0$ and $L_{t-\tau}\left(  \widetilde{P}\right)  $ is a PT-symmetric operator.

Thus, for the differential operator $L(n,m,U)$ generated in the space
$L_{2}^{m}(a,b)$ by the differential expression (3) and boundary condition
(31), the assumption that $p_{k,i,j}$ are PT-symmetric functions for all
$k,i$, $j$ does not imply that $L(n,m,U)$ is a PT-symmetric operator. We also
need to consider boundary condition (31). It follows from Definition 2 that

\begin{proposition}
The operator $L(n,m,U)$ is PT-symmetric if and only if $l(n,m)$ is a
PT-symmetric expression and $PT\mathbf{f}$ satisfies (31) for all $\mathbf{f}$
satisfying (31).
\end{proposition}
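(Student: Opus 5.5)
The plan is to unpack Definition 2 for $A=L(n,m,U)$ and to compare its two requirements with the two conditions in the Proposition. By construction, $D(L(n,m,U))$ consists of those $\mathbf{f}\in\left(W_{1}^{n}(a,b)\right)^{m}$ with $l(n,m)\mathbf{f}\in L_{2}^{m}(a,b)$ and $U_{j}\mathbf{f}=0$ for $j=1,\dots,n$. On this domain the operator acts as $l(n,m)$. Definition 2 therefore asks for two things: (i) $PT\mathbf{f}\in D(L(n,m,U))$ whenever $\mathbf{f}\in D(L(n,m,U))$, and (ii) $PT\,l(n,m)\mathbf{f}=l(n,m)\,PT\mathbf{f}$ on this domain. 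Throughout, the interval is assumed symmetric, $(a,b)=(-c,c)$ with $c\le\infty$, since otherwise $P$ does not act on $L_{2}^{m}(a,b)$.

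\emph{Sufficiency.} Suppose $l(n,m)$ is PT-symmetric, so by Theorem 1 all $p_{k,i,j}$ are PT-symmetric, and suppose $PT$ preserves (31).

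\begin{itemize}
\item First, $\mathbf{f}\mapsto\overline{\mathbf{f}(-x)}$ maps $\left(W_{1}^{n}\right)^{m}$ onto itself. Its derivatives are $(-1)^{s}\overline{\mathbf{f}^{(s)}(-x)}$, so the Sobolev norms are preserved.
\item The computation in the proof of Theorem 1 gives $l(n,m)PT\mathbf{f}=PT\,l(n,m)\mathbf{f}$ pointwise. Since $PT$ is an isometry of $L_{2}^{m}$, the function $l(n,m)PT\mathbf{f}$ lies in $L_{2}^{m}$.
\item Together with the hypothesis on (31), this gives (i). Condition (ii) is exactly the pointwise identity above.
\end{itemize}

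\emph{Necessity.} Suppose $L(n,m,U)$ is PT-symmetric in the sense of Definition 2.

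\begin{itemize}
\item Condition (i) directly implies that $PT\mathbf{f}$ satisfies (31) for every $\mathbf{f}$ satisfying (31) that lies in the domain. Functions satisfying (31) but outside the domain are outside the intended scope, since (31) is only imposed on $D(L(n,m,U))$; I would read "all $\mathbf{f}$ satisfying (31)" in that sense.
\item It remains to show that $l(n,m)$ is PT-symmetric, that is, that all $p_{k,i,j}$ are PT-symmetric. I would repeat the argument from the second half of the proof of Theorem 1. If $\overline{P_{k}(-x)}\neq P_{k}(x)$ for some indices, then (ii) forces $\overline{\mathbf{f}(-x)}$ to solve the nontrivial equation
\[
\sum_{j}(i)^{n-k_{j}}\left(\overline{P_{k_{j}}(-x)}-P_{k_{j}}(x)\right)\overline{\mathbf{f}^{(n-k_{j})}(-x)}=0
\]
for every $\mathbf{f}$ in the domain.
\item The domain contains all smooth compactly supported functions in the interior. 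For these, $\mathbf{f}^{(s)}$ can be prescribed freely at any interior point $x_{0}$, so choosing $x_{0}$ where some entry of the coefficient difference is nonzero yields a contradiction.
\end{itemize}

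The main obstacle is this last step. The paper's version of the argument ("finite dimensional set of solutions") is loose: the coefficient differences are only locally integrable and may vanish on large sets, so "nonzero at a point" needs care. I would handle it by localizing with test functions $\varphi$ and observing that $\int(\overline{P_{k}(-x)}-P_{k}(x))\overline{\varphi^{(n-k)}(-x)}\,dx=0$ for all $\varphi$. Since $D$ commutes with $PT$ only up to sign, there is a single dominant order to consider. Testing inductively from the highest such $k_{j}$, using $\varphi=\psi\,e_{l}$ with $\psi$ localized near Lebesgue points, should show that each difference vanishes almost everywhere. This recovers Theorem 1's conclusion in the present setting and finishes the proof.
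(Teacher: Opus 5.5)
Your overall route is the paper's. There, Proposition~1 is read off directly from Definition~2: invariance of the domain corresponds to preservation of (31), and the commutation relation corresponds to PT-symmetry of $l(n,m)$, i.e.\ Theorem~1. Your sufficiency half is correct.

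The gap is in your sharpened necessity step. The claim that $D(L(n,m,U))$ contains all smooth compactly supported functions is false under (4). The zeroth-order coefficients $p_{n,i,j}$ lie only in $W_{1}^{0}=L_{1}$ (for the Dirac operator, $P$ itself is merely integrable), so $P_{n}\varphi$ need not lie in $L_{2}^{m}$. For instance, $p_{1}(x)=\left\vert x\right\vert ^{-1/2}$ near $0$ with $p_{2}=p_{3}=p_{4}=0$ (a PT-symmetric choice) excludes every $\varphi$ with $\varphi_{1}(0)\neq0$. Taking $P_{n}=p(x)I$ with $p\in L_{1}$ having such singularities on a dense set even gives $C_{0}^{\infty}\cap D(L(n,m,U))=\{0\}$, so your test functions may not exist at all.

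Moreover, Definition~2 (using that $PT$ maps the domain onto itself) yields only the pointwise identity
\[
\sum_{k=0}^{n}(-i)^{n-k}Q_{k}(x)\mathbf{g}^{(n-k)}(x)=0\text{ a.e.},\qquad Q_{k}(x):=\overline{P_{k}(-x)}-P_{k}(x),
\]
for all $\mathbf{g}\in D(L(n,m,U))$. The separate integral identities for each $k$ that you write down do not follow from it until the orders have been separated.

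The repair is to multiply domain functions, rather than constant vectors, by cut-offs. Since $P_{k}$ for $k<n$ and $\mathbf{g}^{(r)}$ for $r<n$ are continuous, $D(L(n,m,U))$ is stable under multiplication by smooth $\psi$ supported away from the endpoints. It also contains cut-offs of local solutions of $l(n,m)\mathbf{g}=0$ with arbitrary $\mathbf{g}(x_{0})$. Apply the identity to $\psi\mathbf{g}$, subtract $\psi$ times the identity for $\mathbf{g}$, and take $\psi=(x-x_{0})^{j}/j!$ near $x_{0}$ for $j=1,2,\dots,n$ successively. This gives
\[
\sum_{k=0}^{n-j}(-i)^{n-k}\binom{n-k}{j}Q_{k}\mathbf{g}^{(n-k-j)}=0\text{ near }x_{0}.
\]
The case $j=n$ gives $Q_{0}\mathbf{g}=0$. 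Then $j=n-1,\dots,1$ give $Q_{1}\mathbf{g}=\dots=Q_{n-1}\mathbf{g}=0$, and the original identity gives $Q_{n}\mathbf{g}=0$. Letting $\mathbf{g}$ run over $m$ solutions with linearly independent values at $x_{0}$ yields $Q_{k}=0$ a.e.

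Finally, on $(-c,c)$ this gives $Q_{k}=0$ only on $(-c,c)$. By periodicity, that amounts to PT-symmetry of $l(n,m)$ in the sense of Definition~1 only when $2c\geq\pi$. For shorter intervals the \emph{only if} direction must be read relative to $(-c,c)$; the paper's one-line justification also passes over this point.
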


Arguing as in the proof of Theorem 2, we obtain

\begin{proposition}
If $L(n,m,U)$ is a PT-symmetric operator and $\lambda$ is an eigenvalue of
$L(n,m,U)$, then $\overline{\lambda}$ is also an eigenvalue of $L(n,m,U).$
\end{proposition}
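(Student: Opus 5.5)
Let $\lambda$ be an eigenvalue of $L(n,m,U)$ and $\mathbf{\Psi}\in D(L(n,m,U))$, $\mathbf{\Psi}\neq0$, a corresponding eigenfunction, so that $l(n,m)\mathbf{\Psi}=\lambda\mathbf{\Psi}$ and $U_{j}\mathbf{\Psi}=0$ for $j=1,\dots,n$. The idea is to follow the proof of Theorem 2, with the quasiperiodicity argument replaced by the domain-invariance part of Definition 2. The candidate eigenfunction for $\overline{\lambda}$ is $\mathbf{\Phi}:=PT\mathbf{\Psi}$, that is, $\mathbf{\Phi}(x)=\overline{\mathbf{\Psi}(-x)}$. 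Throughout, the interval $(a,b)$ must be symmetric, $a=-b$, since otherwise $P$ does not act on $L_{2}^{m}(a,b)$. This is implicit in the requirement $PT\mathbf{f}\in D(A)$ of Definition 2.

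\textbf{Step 1 (membership in the domain).} Since $L(n,m,U)$ is PT-symmetric, Definition 2 (equivalently Proposition 1) gives $PT\mathbf{f}\in D(L(n,m,U))$ for every $\mathbf{f}\in D(L(n,m,U))$. Applied to $\mathbf{f}=\mathbf{\Psi}$, this shows that $\mathbf{\Phi}$ lies in the domain, so in particular it satisfies the boundary conditions (31). Moreover $PT$ is an involution, $(PT)^{2}=I$, so it is injective and $\mathbf{\Phi}\neq0$.

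\textbf{Step 2 (eigenvalue equation).} Apply $PT$ to the identity $L(n,m,U)\mathbf{\Psi}=\lambda\mathbf{\Psi}$. The operator $PT$ is conjugate-linear, since $PT(c\mathbf{g})=\overline{c}\,PT\mathbf{g}$, and therefore
\[
PTL(n,m,U)\mathbf{\Psi}=\overline{\lambda}\,PT\mathbf{\Psi}=\overline{\lambda}\,\mathbf{\Phi}.
\]
By (30), the left-hand side equals $L(n,m,U)PT\mathbf{\Psi}=L(n,m,U)\mathbf{\Phi}$. Hence $L(n,m,U)\mathbf{\Phi}=\overline{\lambda}\mathbf{\Phi}$ with $\mathbf{\Phi}\neq0$ in the domain, so $\overline{\lambda}$ is an eigenvalue.

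\textbf{Main point to check.} The only step needing care is the one that is automatic here but was nontrivial in Theorem 2: that $PT\mathbf{\Psi}$ satisfies the boundary conditions. In Theorem 2 this required the quasiperiodicity relation (13), whereas here it is built into the hypothesis through Definition 2. Two further observations support the argument. Commutation in (30) for the expression alone follows from Theorem 1 whenever the $p_{k,i,j}$ are PT-symmetric. The conjugate-linearity of $PT$ is exactly what turns $\lambda$ into $\overline{\lambda}$.

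If multiplicities are also wanted, the argument extends to associated functions. Applying $PT$ to $(L(n,m,U)-\lambda I)\mathbf{\Psi}_{1}=\mathbf{\Psi}$ gives $(L(n,m,U)-\overline{\lambda}I)PT\mathbf{\Psi}_{1}=PT\mathbf{\Psi}$, with $PT\mathbf{\Psi}_{1}$ in the domain by Definition 2. So $PT$ maps Jordan chains for $\lambda$ to Jordan chains for $\overline{\lambda}$. Because $PT$ is a bijective conjugate-linear map, it preserves linear independence, and since $(PT)^{2}=I$ the correspondence is symmetric, so the multiplicities of $\lambda$ and $\overline{\lambda}$ coincide.
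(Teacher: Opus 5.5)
Your proof is correct and matches the paper's argument. You apply $PT$ to the eigenvalue equation, use the domain invariance and the commutation relation (30) from Definition 2, and conclude that $PT\mathbf{\Psi}$ is an eigenfunction for $\overline{\lambda}$. Your added check that $PT\mathbf{\Psi}\neq0$ via $(PT)^{2}=I$ and your extension to Jordan chains are both sound refinements that the paper does not spell out here.
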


\begin{proof}
$(a)$ Let $\lambda$ be an eigenvalue of the PT-symmetric operator $L(n,m,U).$
Then
\[
L(n,m,U)\mathbf{f}=\lambda\mathbf{f}%
\]
for some $\mathbf{f}\in D(L(n,m,U))$. Applying the operator $PT$ to both sides
of this equality and then using Definition 2, we obtain
\[
PTL(n,m,U)\mathbf{f}(x)=\overline{\lambda}\overline{\mathbf{f}(-x)},\text{ }%
\]
and $\overline{\mathbf{f}(-x)}\in D(L(n,m,U)).$ Hence,
\[
L(n,m,U)\overline{\mathbf{f}(-x)}=L(n,m,U)PT\mathbf{f}=PTL(n,m,U)\mathbf{f}%
(x)=\overline{\lambda}\overline{\mathbf{f}(-x)}.
\]
Thus, $\overline{\lambda}$ is also an eigenvalue of $L(n,m,U).$
\end{proof}

Now let us consider the real eigenvalues of $L(n,m,U)$ by using Proposition 2
and the arguments from the proof of Theorem 3.

\begin{proposition}
If there exist $a\in\mathbb{R}$ and $\varepsilon>0$ such that the circle
$\left\{  z\in\mathbb{C}:\left\vert z-a\right\vert =\varepsilon\right\}  $
contains only one eigenvalue $\lambda$ of the PT-symmetric operator $L(n,m,U)$
in its interior, then this eigenvalue is real.
\end{proposition}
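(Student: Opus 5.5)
The argument is by contradiction, using only Proposition 2 and the reflection symmetry of a disc centered on the real axis. It mirrors the first paragraph of the proof of Theorem 4, with Proposition 2 playing the role of Theorem 2.

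Let $\lambda$ be the unique eigenvalue of the PT-symmetric operator $L(n,m,U)$ lying in the open disc
\[
B=\left\{ z\in\mathbb{C}:\left\vert z-a\right\vert <\varepsilon\right\}.
\]
Suppose, to the contrary, that $\operatorname{Im}\lambda\neq0$. By Proposition 2, $\overline{\lambda}$ is also an eigenvalue of $L(n,m,U)$.

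Next I check that $\overline{\lambda}\in B$. Since $a$ is real, $\overline{a}=a$, and therefore
\[
\left\vert \overline{\lambda}-a\right\vert =\left\vert \overline{\lambda-a}\right\vert =\left\vert \lambda-a\right\vert <\varepsilon .
\]
So the disc $B$ is invariant under complex conjugation, and $\overline{\lambda}$ lies inside the circle $\left\{ z:\left\vert z-a\right\vert =\varepsilon\right\}$.

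Because $\operatorname{Im}\lambda\neq0$, we have $\overline{\lambda}\neq\lambda$. Hence the interior of the circle contains two distinct eigenvalues, $\lambda$ and $\overline{\lambda}$. This contradicts the hypothesis that the interior contains only one eigenvalue, so $\operatorname{Im}\lambda=0$ and $\lambda$ is real.

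The hypothesis should be read as ``only one distinct eigenvalue''. If it is instead read as ``one eigenvalue counting multiplicity'', as in Theorem 3, the same argument works even more directly, since two distinct points already give at least two eigenvalues. Multiplicity is therefore never needed, and the multiplicity part of Theorem 2 does not have to be invoked.

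I expect no step to be a real obstacle. The only point needing care is that Proposition 2 depends on the full PT-symmetry of $L(n,m,U)$, including the requirement in Definition 2 that $PT$ preserves the domain, i.e. the boundary conditions (31). This is exactly what the hypothesis ``PT-symmetric operator'' supplies, so nothing beyond Proposition 2 has to be verified.
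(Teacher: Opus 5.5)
Your proof is correct and is essentially the paper's argument: assume $\lambda\notin\mathbb{R}$, use Proposition 2 to obtain the distinct eigenvalue $\overline{\lambda}$, and observe that it lies in the same disc, which contradicts uniqueness. You also make explicit a step the paper leaves implicit, namely $\left\vert \overline{\lambda}-a\right\vert =\left\vert \lambda-a\right\vert$ because $a$ is real.
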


\begin{proof}
If $\lambda$ is not real, then, by Proposition 2, $\overline{\lambda}$ is also
an eigenvalue lying inside $\left\{  z\in\mathbb{C}:\left\vert z-a\right\vert
=\varepsilon\right\}  $ which contradicts the assumption of the proposition.
\end{proof}

Note that in Proposition 3 we do not assume that $\lambda$ is a simple
eigenvalue; it may be a multiple eigenvalue. Let us discuss some applications
of this proposition for the PT-symmetric operator $L(P,U)$ generated in the
space $L_{2}^{2}(a,b)$ by the differential expression (1) and some boundary
conditions $U(y)=0.$ Let $\lambda_{1}(P),\lambda_{2}(P),...$ be eigenvalues of
$L(P,U)$ such that
\begin{equation}
\left\vert \lambda_{1}(P)\right\vert \leq\lambda_{2}(P)\leq.... \tag{33}%
\end{equation}

\begin{proposition}
If the eigenvalue $\lambda_{k}(O)$ of $L(O,U)$, is real and
\begin{equation}
\left\vert \lambda_{k}(P)-\lambda_{k}(O)\right\vert <\frac{1}{2}d_{k}(P),
\tag{34}%
\end{equation}
then the eigenvalues $\lambda_{k}(P)$ of $L(P,U)$ is also real, where $O$ is
the zero matrix and
\[
d_{k}(P)=\min_{n\neq k}\left\vert \lambda_{k}(P)-\lambda_{n}(P)\right\vert .
\]

\end{proposition}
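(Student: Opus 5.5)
The plan is to apply Proposition 3 with the real center $a=\lambda_{k}(O)$ and radius $\varepsilon=\frac{1}{2}d_{k}(P)$. Here $L(P,U)$ is assumed to be the PT-symmetric operator in the setting of the preceding discussion, so Proposition 2 applies to it. It therefore suffices to show that the open disc $\left\{ z:\left\vert z-\lambda_{k}(O)\right\vert <\frac{1}{2}d_{k}(P)\right\}$ contains exactly one eigenvalue of $L(P,U)$, namely $\lambda_{k}(P)$ (possibly a multiple eigenvalue, which Proposition 3 allows).

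First, (34) says directly that $\lambda_{k}(P)$ lies inside this disc. Second, let $\lambda_{n}(P)$ with $\lambda_{n}(P)\neq\lambda_{k}(P)$ be any other eigenvalue. By the definition of $d_{k}(P)$ we have $\left\vert \lambda_{n}(P)-\lambda_{k}(P)\right\vert \geq d_{k}(P)$. The triangle inequality then gives
\[
\left\vert \lambda_{n}(P)-\lambda_{k}(O)\right\vert \geq\left\vert \lambda_{n}(P)-\lambda_{k}(P)\right\vert -\left\vert \lambda_{k}(P)-\lambda_{k}(O)\right\vert >d_{k}(P)-\frac{1}{2}d_{k}(P)=\frac{1}{2}d_{k}(P).
\]
So no other eigenvalue lies in the disc or on its boundary circle. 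Third, Proposition 3 now shows that $\lambda_{k}(P)$ is real. Equivalently, one can argue directly: if $\lambda_{k}(P)$ were nonreal, then by Proposition 2 the number $\overline{\lambda_{k}(P)}$ would also be an eigenvalue. Since the center $\lambda_{k}(O)$ is real, $\overline{\lambda_{k}(P)}$ is at the same distance from it, hence inside the disc, and it differs from $\lambda_{k}(P)$. This contradicts the second step.

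The main obstacle is interpreting $d_{k}(P)$ correctly. If the list (33) repeats multiple eigenvalues according to multiplicity, then the minimum over $n\neq k$ could be $0$, and hypothesis (34) could never hold. I would therefore read $d_{k}(P)$ as the minimum over those $n$ with $\lambda_{n}(P)\neq\lambda_{k}(P)$, and I would need $d_{k}(P)>0$, which holds because the spectrum is discrete. With this reading the argument above goes through unchanged. I would also note that the hypothesis that $\lambda_{k}(O)$ is real is used only to make the disc symmetric under complex conjugation.
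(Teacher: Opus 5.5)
Your proof is correct and is essentially the paper's argument: apply Proposition 3 to the disc centred at the real point $\lambda_{k}(O)$ with radius $\frac{1}{2}d_{k}(P)$, where your triangle-inequality estimate supplies the step the paper only asserts and lets you skip its case split. Note that (34) itself forces $d_{k}(P)>0$, so if eigenvalues are listed with multiplicity (as the paper does) it simply means $\lambda_{k}(P)$ is simple; your reading over distinct eigenvalues gives a slightly more general statement.
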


\begin{proof}
If $\lambda_{k}(P)=\lambda_{k}(O),$ then $\lambda_{k}(P)$ is real, since
$\lambda_{k}(O)$ is real. It remains to consider the case%
\[
\left\vert \lambda_{k}(P)-\lambda_{k}(O)\right\vert >\varepsilon
\]
for some $\varepsilon>0.$ Then, by (34) $d_{k}(P)>2\varepsilon,$ and hence by
(34), $\lambda_{k}(P)$ is a simple eigenvalue. Moreover, the circle $\left\{
z\in\mathbb{C}:\left\vert z-\lambda_{k}(O)\right\vert =\frac{1}{2}%
d_{k}(P)\right\}  $ contains only one eigenvalue $\lambda_{k}(P)$ of the
PT-symmetric operator $L(P,U)$ in its interior. Therefore the result follows
from Proposition 3.
\end{proof}

This proposition can, for example, be applied to the operator $L(P,U),$ if
$U(y)=0$ is a strongly regular boundary condition and the eigenvalues of
$L(O,U)$ are real. In this case, in general, there exists asymptotic formulas
\[
\lambda_{k}(P)=\lambda_{k}(O)+\alpha_{k}(P)
\]
as $k\rightarrow\infty,$ where $\alpha_{k}(P)<\frac{1}{2}d_{k}(P)$.

\end{document}